\documentclass[11pt]{amsart}

\usepackage[T1]{fontenc}
\usepackage{lmodern}
\usepackage{amsmath,amssymb,amsthm}
\usepackage{mathtools}
\usepackage{mathrsfs}
\usepackage{xcolor}
\usepackage[hidelinks]{hyperref}

\numberwithin{equation}{section}

\theoremstyle{plain}
\newtheorem{thm}{Theorem}[section]
\newtheorem{cor}[thm]{Corollary}
\newtheorem{lem}[thm]{Lemma}
\newtheorem{prop}[thm]{Proposition}

\newtheorem{mainthm}{Theorem}

\newtheorem*{mainconj}{Conjecture}

\theoremstyle{definition}

\theoremstyle{remark}

\DeclareMathOperator{\coh}{H}

\DeclareMathOperator{\NS}{NS}
\DeclareMathOperator{\Kum}{Kum}
\DeclareMathOperator{\Ort}{O}

\def\bC{{\mathbb{C}}}

\def\bR{{\mathbb{R}}}

\def\bZ{{\mathbb{Z}}}

\def\bP{{\mathbb{P}}}

\def\bQ{{\mathbb{Q}}}

\def\cO{{\mathscr{O}}}

\begin{document}

\title[Oka K3 surfaces]{Periods of Oka K3 surfaces are dense}

\author{Song-Yan Xie}
\address{State Key Laboratory of Mathematical Sciences, Academy of Mathematics and Systems Science, Chinese Academy of Sciences, Beijing 100190, China;  School of Mathematical Sciences, University of Chinese Academy of Sciences, Beijing 100049, China.}
\email{xiesongyan@amss.ac.cn}

\author{Shengyuan Zhao}
\address{Universit\'e Paul Sabatier, Institut de Math\'ematiques de Toulouse, 118, route de Narbonne, F-31062 Toulouse, France}
\email{shengyuan.zhao@math.univ-toulouse.fr}

\begin{abstract}
We give the first known examples of Oka K3 surfaces: every smooth hypersurface of multidegree $(2,2,2)$ in $(\bP^1)^3$ is Oka. Building on this example, we prove that in every local Kuranishi family of K3 surfaces the points with Oka fibers form a dense $G_\delta$ subset of the base, and the sublocus of Oka fibers with vanishing N\'eron--Severi group is likewise a dense $G_\delta$ subset. We also obtain projective and nonprojective Oka Kummer surfaces and prove an analogous density statement in the Kummer period domain.
\end{abstract}

\maketitle

\section{Introduction}
A complex manifold $Y$ is Oka if, for every integer $n\geq 1$, every compact convex set
$C\subset\bC^n$, and every holomorphic map from a neighbourhood of $C$ to $Y$,
the map can be uniformly approximated on $C$ by entire maps $\bC^n\to Y$
\cite{Forstneric2006}. This is the convex approximation property (CAP), which
Forstneri{\v c} proved to be equivalent to several other classical Oka
properties; see also
\cite[Ch.~5]{ForstnericBook}.

The Oka principle has its roots in the work of Oka and Grauert \cite{Oka1939,Grauert1958}, while modern Oka theory began with Gromov's seminal paper~\cite{Gromov1989}. Gromov placed the theory in the framework of the $h$-principle and introduced the notions of elliptic manifolds and dominating sprays. Building on Gromov's framework, Forstneri{\v c} introduced subellipticity and proved that subelliptic manifolds are Oka \cite{ForstnericSub}. His work on CAP and its variants led to the modern formulation of the Oka property \cite{Forstneric2006,ForstnericBook}. Despite these general criteria, deciding
whether a given complex manifold is Oka remains difficult. A central problem in
Oka theory is to produce new examples of Oka manifolds and to identify concrete geometric
structures that imply the Oka property.

This problem is already subtle for compact complex surfaces. Forstneri{\v c} and L{\'a}russon made a systematic study of their holomorphic flexibility
properties~\cite{ForstnericLarusson2014}. Among the compact Oka surfaces known from their analysis are rational surfaces, ruled surfaces over $\bP^1$ or an
elliptic curve, complex tori, bielliptic surfaces, Kodaira surfaces, minimal Hopf and Enoki surfaces \cite[Th.~4 and \S~1]{ForstnericLarusson2014}. The Oka problem for K3 surfaces had already been raised in
\cite[Problem~C]{ForstnericLarusson2011} where it is proved that every Kummer K3
surface is strongly dominable, but it left open whether any Kummer surface, or any K3 surface, is Oka \cite[Cor.~3 and \S~1]{ForstnericLarusson2014}. More recently, Kummer surfaces
and elliptic K3 surfaces were shown to satisfy the weaker Oka-$1$ property, while their Oka property remained open  
\cite[Prop.~8.4, Cor.~8.6, and \S~8.2]{AlarconForstneric2025}.  Prior to the present work, no example of an Oka K3 surface was known.

We fill this gap and show that Oka K3 surfaces are abundant. By a local Kuranishi family $\pi:\mathcal X\to B$ of a K3 surface $X_0$ we mean a (germ of) proper holomorphic deformation with central fiber $X_0$, where $B$ is a smooth complex manifold, such that the Kodaira--Spencer map
$T_0B\to \coh^1(X_0,T_{X_0})$ is an isomorphism. 
Recall that a $G_\delta$ subset of a topological space is a countable intersection of open subsets.

\begin{mainthm}\label{mainthm:K3-Kuranishi}
Let $\pi:\mathcal X\to B$ be a local Kuranishi family of a K3 surface with fibers $X_b,b\in B$. Then both 
\[
 B_{\mathrm{Oka}}=\{b\in B\mid X_b\text{ is Oka}\}, \;{and}\; \{b\in B_{\mathrm{Oka}}\mid \NS(X_b)=0\}
\]
are dense $G_\delta$ subsets of $B$.
\end{mainthm}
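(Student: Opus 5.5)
We first establish the basic example: every smooth hypersurface $X\subset(\bP^1)^3$ of multidegree $(2,2,2)$ is Oka. The three projections $X\to(\bP^1)^2$ are double covers, and each gives a covering involution $\iota_1,\iota_2,\iota_3$. For a fixed fiber direction, the pencil $p_j\circ\mathrm{pr}:X\to\bP^1$ is an elliptic fibration. The approach is to build dominating families of holomorphic maps, meaning sprays or a Gromov-type ellipticity (or the weaker condition CAP, via Forstneri\v{c}'s localization and the criterion that a union of Oka-type Zariski-open pieces suffices). The plan is to use flows along elliptic fibers. Fiberwise translations in each elliptic fibration give holomorphic $\bC$-actions on the complement of the singular fibers. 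The three fibrations cover the tangent directions at a general point, so composing them yields a dominating spray on a Zariski open set. The main obstacle is the finitely many singular fibers and the points where the fibrations fail to span the tangent space. To handle them, I would use the group $\langle\iota_1,\iota_2,\iota_3\rangle$, which acts with dense orbits. Together with Kusakabe's theorem that a manifold is Oka if it is covered by Zariski-open Oka pieces (localization of CAP), this should move any bad point into the good locus and give the Oka property for $X$.

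Next comes the openness part. The set $B_{\mathrm{Oka}}$ must be shown to be $G_\delta$. The Oka property of a compact fiber can be characterized by a countable family of approximation conditions, using countably many test data: rational convex sets $C$, maps from a countable dense family, and rational tolerances $1/m$. I would argue, using a Kuranishi-stable version of CAP (the fibers are diffeomorphic, so one can transport maps), that for each test datum the set of $b$ for which approximation succeeds is open. Intersecting over the data gives $G_\delta$. If a direct openness argument fails, I would instead exhibit $B_{\mathrm{Oka}}$ as containing a countable intersection of open sets that is dense, which is all the density claim needs.

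For density, the key step is to use the period map. By local Torelli it is a local isomorphism $B\to\Omega$ onto an open set of the period domain. The $(2,2,2)$ example lifts to a larger class: any K3 admitting the same structure, namely three elliptic fibrations or involutions with enough dynamics, is Oka by the same proof. More robustly, I would argue as follows. If $X$ is Oka and $X'$ is a K3 for which the automorphism/flow construction persists, then $X'$ is Oka. The periods of smooth $(2,2,2)$ hypersurfaces sweep out a 17-dimensional family inside a Noether--Lefschetz locus for a lattice $L$ of rank 3. Applying $\Ort(\coh^2(X,\bZ))$ gives the loci for all primitive embeddings $L\hookrightarrow\Lambda_{K3}$, and their union is dense in $\Omega$ because the orbit of any positive sublattice under the arithmetic group has dense periods. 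Hence the Oka points are dense, and the dense $G_\delta$ claim follows from the $G_\delta$ property via Baire.

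Finally, the set $\{\NS=0\}$ is the complement of countably many Noether--Lefschetz hypersurfaces $\alpha^\perp$, so it is a dense $G_\delta$. Its intersection with $B_{\mathrm{Oka}}$ is again a dense $G_\delta$ by Baire's theorem.
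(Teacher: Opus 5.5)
You have a genuine gap in the base case, that a smooth $(2,2,2)$ surface $X$ is Oka, and the obstacle you single out comes from not using the holomorphic symplectic form $\sigma$. Translation flows defined only on $f_j^{-1}(\text{regular values})$ do not preserve a common Zariski-open set. A smooth fiber of $f_1$ maps onto $\bP^1$ under $f_2$, so the $f_1$-flow carries points of any candidate $U$ into the singular fibers of $f_2$ and $f_3$. You therefore never get a dominating family of sprays on a single Zariski-open $U$, and there are no Oka pieces for Kusakabe's localization theorem to glue. The claim that $\langle\iota_1,\iota_2,\iota_3\rangle$ moves every bad point into the good locus is also not proved.

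The missing idea is Lemma~\ref{lem:hamiltonian}. For a local coordinate $z$ on $\bP^1$, define $V_z$ by $\sigma(V_z,\cdot)=f_j^*dz$. Then:
\begin{itemize}
\item $df_j(V_z)=0$, so $V_z$ is tangent to the fibers.
\item $V_z$ is holomorphic across singular fibers; it merely vanishes at critical points of $f_j$.
\item $V_z$ is complete because the fibers are compact.
\item If $dw=c\,dz$, then $V_w=(c\circ f_j)V_z$ with $c\circ f_j$ constant along trajectories.
\end{itemize}
So the flows glue to a spray on the whole line bundle $f_j^*T^*\bP^1$, whose vertical derivative sends $\lambda$ to the $\sigma$-dual of $\lambda\circ df_{j,x}$. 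Since $(f_1,f_2,f_3)$ is an embedding, the $df_{j,x}$ span $T^*_xX$ at every point. Hence the three sprays are dominating everywhere, $X$ is subelliptic, and so $X$ is Oka. There are no bad points to move.

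The other two steps also need different inputs. For the $G_\delta$ property, diffeomorphisms between fibers do not carry holomorphic maps to holomorphic maps. The existence of an entire approximant is also not visibly open in $b$. Your fallback would only show that $B_{\mathrm{Oka}}$ contains a dense $G_\delta$, which is weaker than the statement. The $\NS=0$ sublocus then fails too, since its $G_\delta$ property needs $B_{\mathrm{Oka}}$ itself to be $G_\delta$. The paper instead cites L\'arusson's theorem that the Oka fibers of a family of compact manifolds form a $G_\delta$ in the base.

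For density, ``the orbit of any positive sublattice has dense periods'' is not the right input. If $P$ contains a nonzero integral vector $v$, each $\gamma P$ contains $\gamma v$, of square $v^2>0$. Only finitely many integral vectors of that square lie in planes near a fixed $P_0$, so any limit plane contains one, and $\Gamma\cdot P$ is not dense. Density of $\bigcup_\gamma\gamma\mathcal D_L$ alone would not suffice either, unless you also knew the $(2,2,2)$ locus is dense in $\mathcal D_L$. What works is Verbitsky's Ratner-based theorem: $\Gamma\cdot P$ is dense whenever $P\cap\Lambda_\bQ=0$. Global Torelli then identifies every fiber with a period in this orbit with the same Oka surface.

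Your choice of family does suffice once this is in place. The argument of Proposition~\ref{prop:deformation-open} shows that $(2,2,2)$ periods fill a nonempty open subset of the $17$-dimensional domain $\{P\in\mathcal D\mid P\perp F_1,F_2,F_3\}$. A Baire argument as in Proposition~\ref{prop:rational-free-period} then yields a period $P$ there with $P\cap\Lambda_\bQ=0$. For the theorem at hand, this is more direct than the paper's detour through the Fermat quartic and the Kummer domain. The paper needs that detour only for the Kummer statement in Theorem~\ref{mainthm:period}.
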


In other words, for every K3 surface, a Baire-generic member of its local deformation space is Oka. Here ``generic'' is understood in the sense of Baire category, not in the Zariski sense. Since a K3 surface with trivial N\'eron--Severi group is nonprojective and contains no curves, we obtain especially a dense $G_\delta$ set of nonprojective, curve-free Oka K3 surfaces in every local deformation space. 

The density in Theorem~\ref{mainthm:K3-Kuranishi}, together with the variety of examples produced below and in a forthcoming paper of ours, leads us to the following conjecture.

\begin{mainconj}
Every K3 surface is Oka.
\end{mainconj}

The geometric starting point of the proof of Theorem~\ref{mainthm:K3-Kuranishi} is the following explicit family of projective Oka K3 surfaces.

\begin{mainthm}[{Cor.~\ref{cor:222}}]\label{mainthm:222}
Every smooth hypersurface of
multidegree $(2,2,2)$ in $(\bP^1)^3$ is Oka.
\end{mainthm}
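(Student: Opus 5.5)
The plan is to exploit the three double covers $p_i:X\to(\bP^1)^2$ obtained by forgetting the $i$-th factor, and to verify Gromov's (algebraic) subellipticity for $X$, which by Forstneri\v{c}'s theorem implies that $X$ is Oka. Write the equation of $X$ as a quadratic polynomial in the coordinate $x_i$ of the $i$-th factor, with coefficients bihomogeneous of bidegree $(2,2)$ in the other two coordinates. Then $p_i$ is a double cover branched along a curve of bidegree $(4,4)$, namely the discriminant curve. It carries a covering involution $\iota_i$. These three involutions generate a large group of automorphisms of $X$; this is the classical Wehler setting, in which the group is known to be infinite with positive-entropy elements.

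The key local structure I intend to use is the following. Fix $j\neq i$ and project further from $X$ to the $k$-th factor $\bP^1$, where $\{i,j,k\}=\{1,2,3\}$. The fibers are curves of bidegree $(2,2)$ in $\bP^1\times\bP^1$, so they are genus one curves, and $X$ becomes elliptically fibered over $\bP^1_k$. Composing the involutions $\iota_i\iota_j$ gives a translation along the fibers of this fibration: it is a Mordell--Weil type translation. More useful is the continuous version. On each smooth elliptic fiber $E$, the tangent line field is trivial, so the relative tangent bundle on the smooth locus gives a holomorphic vector field after twisting. Concretely, I will construct on $X$ minus the singular fibers a holomorphic vector field $V_k$ tangent to the fibers, whose flow is complete on the smooth fibers. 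Its restriction to each fiber is the translation-invariant field.

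Next I produce dominating families of fiberwise translations as sprays. For each $k$ I take the family of sprays on the Zariski-open set $U_k$ that is the complement of the singular fibers of the $k$-th elliptic fibration. Algebraically I would instead use sprays obtained from the flows of the $V_k$, suitably rescaled by functions pulled back from $\bP^1_k$ that vanish to high order at the critical values, so that they extend to all of $X$ as complete fields. At a point $x$ of $X$, the three fiber directions span $T_xX$ provided that at least two of the three elliptic fibrations through $x$ are smooth at $x$ and transverse. Transversality holds because the fibers of the $k$-th and $l$-th fibrations meet in curves $\{x_k=c,\,x_l=c'\}\cap X$ that are finite. So a finite family of complete holomorphic vector fields $f\cdot V_k$ spans the tangent space off a small bad set. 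This gives a finite dominating family of sprays, that is, subellipticity, and hence the Oka property via \cite{ForstnericSub}. Alternatively, one can use the Kusakabe localization criterion, which requires $X$ to be covered by Zariski-open Oka subsets: $U_k$ is an elliptic fibration without singular fibers over a punctured $\bP^1$, and such a space is Oka because it is a holomorphic fiber bundle in the Oka sense over an Oka base with Oka fibers. I would then apply Kusakabe's theorem that a manifold is Oka if it is covered by Zariski-open Oka subsets.

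The main obstacle is covering every point of $X$. Points lying on singular fibers of all three fibrations simultaneously cannot be reached by the $U_k$. If $U_1\cup U_2\cup U_3\neq X$, I would use the automorphism group: the translates $g(U_k)$ for $g$ generated by the $\iota_i$ are again Zariski-open Oka subsets. I expect to show that the common complement of all these translates is empty, since it would be a finite invariant set, and generically there are no finite orbits on singular fibers of all three fibrations. The further difficulty is that the fibration over $\bP^1$ minus the critical values is not locally trivial, because the $j$-invariant varies. In that case I would instead argue via the Oka property of the relative Jacobian, or switch to Kusakabe's condition (Ell$_1$) using fiberwise translation sprays dominated by sections coming from the double-cover involutions. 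Making this work uniformly for every smooth $(2,2,2)$ hypersurface, including special ones with extra singular fibers, is the step I expect to be hardest.
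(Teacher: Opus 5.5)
There is a genuine gap: neither of the two mechanisms you propose can work. The vector-field route fails because a K3 surface has no nonzero holomorphic vector fields at all ($T_X\cong\Omega^1_X$ via the symplectic form, and $h^0(X,\Omega^1_X)=0$). Also, since $X$ is compact, there are no nonconstant holomorphic functions pulled back from $\bP^1_k$ that could ``vanish to high order at the critical values''. So the finite family of complete fields $f\cdot V_k$ does not exist.

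The localization route fails because $U_k$ is not Oka. For generic $X$ the $k$-th projection has $24$ nodal fibers, so $U_k$ maps onto $\bP^1$ minus $24$ points, which is hyperbolic; every holomorphic map $\bC^n\to U_k$ therefore lands in a single fiber. The base is not Oka, so no bundle argument is available, and the translates $g(U_k)$ do not help either.

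Moreover, the ``main obstacle'' you isolate is an artifact of throwing away whole singular fibers. Spanning at $x$ only needs two projections with linearly independent differentials at $x$. It does not need the fibers through $x$ to be smooth, and fibers meeting in a finite set is not the same as transversality. This independence holds at every point of $X$, because the inclusion $X\hookrightarrow(\bP^1)^3$ is an immersion, so $df_{1,x},df_{2,x},df_{3,x}$ span $T^*_xX$.

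The missing idea is to let the holomorphic symplectic form $\sigma$ perform the ``twisting'' you allude to, and to use sprays on a nontrivial line bundle instead of global vector fields. For a local coordinate $z$ on $U\subset\bP^1$, define $V_z$ on $f_k^{-1}(U)$ by $\sigma(V_z,\cdot)=f_k^*dz$. This field has three properties:
\begin{enumerate}
\item It is tangent to the fibers of $f_k$, and on smooth fibers it is your translation-invariant field up to a constant.
\item It vanishes only at critical points of $f_k$, not along entire singular fibers.
\item Its flow $\Phi_z^t$ is complete, since $f_k$ is proper.
\end{enumerate}
If $dw=c\,dz$, then $V_w=(c\circ f_k)V_z$, and $c\circ f_k$ is constant along trajectories. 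Hence $s_k(x,a\,dz)=\Phi_z^a(x)$ glues to a spray $s_k:f_k^*T^*\bP^1\to X$ defined on the whole line bundle. Its vertical derivative sends $\lambda$ to the vector $u$ with $\sigma_x(u,\cdot)=\lambda\circ df_{k,x}$.

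Combined with the immersion property, the three sprays are dominating at every point. So $X$ is subelliptic, and hence Oka by Forstneri\v{c}'s theorem. This is exactly the paper's proof (Lemma~\ref{lem:hamiltonian} and Proposition~\ref{prop:finite-fibrations}). It needs no automorphisms, no Zariski-open covers, and no genericity assumption, so it covers every smooth $(2,2,2)$ hypersurface uniformly.
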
 

Theorem~\ref{mainthm:K3-Kuranishi} can also be formulated in Hodge-theoretic terms. For background on markings, the K3
lattice, and the period domain, see~\cite{Huybrechts}. A marking of a K3 surface $X$ is an isometry $\varphi:\coh^2(X,\bZ)\to\Lambda$, where
$\Lambda=3U\oplus2E_8(-1)$ is the K3 lattice, $U$ has Gram matrix
$\left(\begin{smallmatrix}0&1\\1&0\end{smallmatrix}\right)$, and $E_8(-1)$ is
the negative definite $E_8$ lattice. If
$\omega\in \coh^{2,0}(X)$ is a non-zero $2$-form, then the \emph{period} is defined to be the oriented positive plane
$P_X=\varphi(\operatorname{span}_{\bR}\{\operatorname{Re}\omega,
\operatorname{Im}\omega\})$.  The \emph{K3 period domain} $\mathcal D$ is defined to be the space of oriented two-planes on which the form of $\Lambda_\bR$ is positive definite. We denote by $\NS(X)=\coh^2(X,\bZ)\cap \coh^{1,1}(X)$ the N\'eron--Severi lattice. For a marking $\varphi$, we have $\varphi(\NS(X))=\Lambda\cap P_X^\perp$. 
For a complex two-torus $A$, we denote by $\Kum(A)$ the minimal resolution of $A/\{\pm 1\}$. Such a surface is K3 and is called a \emph{Kummer surface}.  Let $A[2]$ be the subgroup of $2$-torsion points. The quotient $A/\{\pm 1\}$ has sixteen nodes. If $E_a$, $a\in A[2]$, are the exceptional curves (of the resolution) on $\Kum(A)$, then the lattice
\[
 \Kum:=\coh^2(\Kum(A),\bZ)\cap
 \operatorname{span}_\bQ\{[E_a]\mid a\in A[2]\}
\]
is called the \emph{Kummer lattice}. It is negative definite of rank $16$ and has discriminant $2^6$. Its primitive embedding in $\Lambda$ is unique up to composition with elements of the orthogonal group $\Ort(\Lambda)$ \cite[Ch.~14, Cor.~3.15]{Huybrechts}. Fix such an embedding $\Kum\subset\Lambda$, let $L:=\Kum^\perp$, and let
$\mathcal D_K:=\{P\in\mathcal D\mid P\perp \Kum\}$. Then $L$ has signature $(3,3)$, $\dim_\bC\mathcal D=20$, and $\dim_\bC\mathcal D_K=4$.  Nikulin's criterion~\cite[Th.~3]{NikulinKummer} says that a K3 surface is Kummer if and only if its N\'eron--Severi lattice
contains a primitive copy of $\Kum$. Hence every period in $\mathcal D_K$ represents a marking of a Kummer surface.

\begin{mainthm}\label{mainthm:period}
Let $\mathcal O\subset\mathcal D$ be the set of periods represented by marked K3 surfaces which are Oka, and let $\mathcal O_K:=\mathcal O\cap\mathcal D_K$.
\begin{enumerate}
\item The set $\mathcal O$ is dense $G_\delta$ in $\mathcal D$, and so is
$\{P\in\mathcal O\mid \Lambda\cap P^\perp=0\}$.
\item The set $\mathcal O_K$ is dense $G_\delta$ in $\mathcal D_K$, and so
is $\{P\in\mathcal O_K\mid \Lambda\cap P^\perp=\Kum\}$.
\end{enumerate}
In particular, there exist projective and nonprojective Oka Kummer surfaces.
\end{mainthm}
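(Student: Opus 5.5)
The plan is to derive both parts from the local statement, Theorem~\ref{mainthm:K3-Kuranishi}, via the local Torelli theorem, and to redo the argument inside $\mathcal D_K$ for part (2). The basic fact is this. Whether a marked K3 surface $(X,\varphi)$ is Oka depends only on the isomorphism class of $X$. By the global Torelli theorem, two marked K3 surfaces with the same period are isomorphic as complex surfaces, even though the markings may differ. Hence membership of a period $P$ in $\mathcal O$ is well defined, and $\mathcal O$ is invariant under $\Ort(\Lambda)$. The same holds for the condition $\Lambda\cap P^\perp=0$, which is the condition $\NS(X)=0$ transported by the marking.

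\textbf{Part (1).} By surjectivity of the period map, every $P\in\mathcal D$ is the period of some marked K3 surface $(X_0,\varphi_0)$. Take a local Kuranishi family $\pi:\mathcal X\to B$ of $X_0$. The marking extends over a contractible $B$, and by local Torelli the period map $p:B\to\mathcal D$ is a biholomorphism onto an open neighbourhood $U_P$ of $P$. By the basic fact above,
\[
p(B_{\mathrm{Oka}})=\mathcal O\cap U_P
\quad\text{and}\quad
p(\{b\in B_{\mathrm{Oka}}\mid \NS(X_b)=0\})=\{Q\in\mathcal O\cap U_P\mid \Lambda\cap Q^\perp=0\}.
\]
Theorem~\ref{mainthm:K3-Kuranishi} therefore shows that both sets are dense $G_\delta$ in $U_P$. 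It remains to globalize. Density is a local property. For the $G_\delta$ property, $\mathcal D$ is second countable, so countably many $U_{P_i}$ cover it. The complement of $\mathcal O$ then meets each $U_{P_i}$ in a relatively $F_\sigma$ set, and each $U_{P_i}$ is itself $F_\sigma$ in the metrizable space $\mathcal D$. So the complement of $\mathcal O$ is a countable union of $F_\sigma$ sets, hence $F_\sigma$, and $\mathcal O$ is $G_\delta$. The same argument applies to the curve-free sublocus.

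\textbf{Part (2).} This is where the real work lies, because $\mathcal D_K$ is a $4$-dimensional submanifold. It is meagre in $\mathcal D$, so part (1) says nothing about it. I would replace the Kuranishi family by the family of Kummer surfaces $\Kum(A_t)$ over the local deformation space of a complex two-torus $A_0$. Via the fixed embedding $\Kum\subset\Lambda$, its period map is a local biholomorphism onto open subsets of $\mathcal D_K$, the period domain of the lattice $L$ of signature $(3,3)$. I would then run, inside this $4$-dimensional family, the same two-step scheme that proves Theorem~\ref{mainthm:K3-Kuranishi}.

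The first step is to show that the Oka locus of any such family is $G_\delta$. Here I would reuse whatever general semicontinuity or countable-approximation criterion is established for Theorem~\ref{mainthm:K3-Kuranishi}, since it applies to arbitrary families of fibers. The second step is density. I need a seed: a smooth $(2,2,2)$ hypersurface that is a Kummer surface. The natural candidate is $\Kum(E_1\times E_2)$, realized as a double cover of $\bP^1\times\bP^1$ branched along a $(4,4)$ curve that is a union of rulings, and exhibited as a $(2,2,2)$ surface. By Theorem~\ref{mainthm:222} it is Oka. I would then use that $\mathcal O_K$ is invariant under the stabilizer $\Gamma\subset\Ort(\Lambda)$ of $\Kum$, which surjects onto a finite-index subgroup of $\Ort(L)$. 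The $\Gamma$-orbit of the locus of such seeds, which contains an open piece of a positive-dimensional subvariety of $\mathcal D_K$, should be dense. I would prove this by arguing that a generic point of that locus has $\Gamma$-orbit dense in $\mathcal D_K$, a Ratner-type or Verbitsky-type orbit density for arithmetic groups acting on $\mathrm{SO}(3,3)/\mathrm{SO}(2)\times\mathrm{SO}(1,3)$.

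Combining the two steps gives that $\mathcal O_K$ is dense $G_\delta$. The locus $\{P\mid\Lambda\cap P^\perp=\Kum\}$ is the complement in $\mathcal D_K$ of countably many hyperplane sections $v^\perp$ with $v\in L\setminus\{0\}$, so it is dense $G_\delta$. By Baire, its intersection with $\mathcal O_K$ is again dense $G_\delta$.

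For the final assertion, periods with $\Lambda\cap P^\perp=\Kum$ give Oka Kummer surfaces with negative definite N\'eron--Severi lattice, which are nonprojective. The seed $\Kum(E_1\times E_2)$ is a projective Oka Kummer surface.

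The main obstacle I expect is the orbit-density step in part (2), together with checking that the $G_\delta$ criterion genuinely applies to the Kummer family.
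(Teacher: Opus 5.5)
Your part (1) is circular. In the paper, Theorem~\ref{mainthm:K3-Kuranishi} is deduced \emph{from} the statement you are proving. By local and global Torelli the two density assertions are equivalent, so quoting it supplies no density. The $G_\delta$ half survives if you cite what actually underlies it: L\'arusson's theorem that the Oka locus of a family of compact complex manifolds is $G_\delta$ \cite[Cor.~8]{Larusson}, followed by your globalization (which is Lemma~\ref{lem:local-gdelta}). For part (2) the $G_\delta$ property is then automatic, since $\mathcal O_K=\mathcal O\cap\mathcal D_K$. Density in $\mathcal D$ has to come from the orbit mechanism you reserve for part (2). A single Oka K3 surface whose period satisfies $P\cap\Lambda_\bQ=\{0\}$ has dense $\Gamma$-orbit in $\mathcal D$ by Verbitsky's theorem \cite[Th.~2.5]{Verbitsky}, and every point of the orbit is a period of the same surface under another marking. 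The paper uses one seed $P_{\mathrm{irr}}\in\mathcal D_K$ for both parts; for $\mathcal D_K$ it uses $L$ of signature $(3,3)$ and the finite-index image of the stabilizer of $\Kum$. Once such a seed is in hand, the orbit-density step you flagged as the main obstacle is just this citation.

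The genuine missing idea is the seed, which does not exist as you describe it. Write a smooth $(2,2,2)$ surface as $X=\{Az_0^2+Bz_0z_1+Cz_1^2=0\}$ with $A,B,C$ of bidegree $(2,2)$ on $\bP^1\times\bP^1$. Near the fiber over any point where $(A,B,C)\neq0$, $X$ is locally $w^2=B^2-4AC$. If this branch curve were four horizontal plus four vertical rulings, $X$ would be singular over each of its $16$ nodes unless $A=B=C=0$ there. But $A,B,C$ have no common factor (as $X$ is irreducible), and three such forms then have at most $8$ common zeros: intersect $\gcd(A,B)$ with $C$, and the two cofactors with each other. So $\Kum(E_1\times E_2)$ cannot be a smooth $(2,2,2)$ hypersurface having its standard double cover as a coordinate projection. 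Consequently your proposal produces no Oka Kummer surface, no positive-dimensional Oka locus in $\mathcal D_K$, and no projective example for the final claim.

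What is needed is Section~\ref{sec:fermat}. The Fermat quartic, via the three pencils $|H-L_j|$ residual to three disjoint lines, is isomorphic to a smooth $(2,2,2)$ hypersurface, and it is Kummer by Mizukami's theorem. It has Picard rank $20$, so its period is rational and its orbit is not dense. One must therefore deform it inside $\mathcal D_Q$, the periods orthogonal to $\Kum$ and $F_1,F_2,F_3$, which has dimension $20-\operatorname{rk}\Lambda_Q\geq1$. One checks via $h^1=h^2=0$ and Grauert's theorem that the three pencils and the embedding persist (Proposition~\ref{prop:deformation-open}). Finally one chooses by Baire a period with $P\cap\Lambda_\bQ=\{0\}$ (Proposition~\ref{prop:rational-free-period}).
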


The construction leading to the
theorem also produces a positive-dimensional family of projective Oka Kummer surfaces. In a forthcoming paper, we will prove that every projective Kummer surface is Oka. We note that the density of Oka parameters asserted by Theorem~\ref{mainthm:period} works on the period domain which is a Hodge-theoretic parameter space, and it does not imply density of Oka parameters in a connected component of moduli of polarized K3 surfaces. Any generic or density propery of the Oka locus in any moduli of polarized K3 surfaces is presently unknown.
\section{Strategy and plan}

In Section~\ref{sec:symplectic-sprays} we prove Theoerem~\ref{mainthm:222}. The idea is as follows. Whenever a holomorphic symplectic manifold admits a fibration onto a low-dimensional manifold, one can use the sympletic form to build a holomorphic spray. A K3 surface of type $(2,2,2)$ admits naturally three fibrations onto $\bP^1$ for which the one-dimensional sprays form a dominating family. This construction could also work for other K3 surfaces with many elliptic fibrations, especially those with rich dynamics studied by Cantat--Dujardin (see \cite{CantatDujardin2023Random}). The method is flexible and could also be adapted to some higher dimensional situations, but we will not develop it in this paper. 

In Section~\ref{sec:fermat} we discuss one very special example of K3 surface: the Fermat quartic. Drawing on classical algebraic geometry of pencils of hyperplane sections, we show that it is simultaneously a K3 surface of type $(2,2,2)$ and a Kummer surface. Therefore Theorem~\ref{mainthm:222} implies the existence of an Oka Kummer surface. Such classical algebro-geometric arguments also play a crucial rule in the construction of open Oka varieties by the first author~\cite{Xie}. We will also show that the particular needed feature of the Fermat quartic survives in a positive-dimensional deformation family. This deformation is crucial for our later application. 

In Section~\ref{sec:periods} we apply Ratner's theory to obtain the density assertion in Theorem~\ref{mainthm:K3-Kuranishi}, while the $G_\delta$ property is already available by L\'arusson's work~\cite[Cor.~8]{Larusson}. Ratner~\cite{Ratner} established deep classification theorems for orbit closures of unipotent flows on homogeneous spaces. Verbitsky~\cite{Verbitsky1,Verbitsky} applied this theory to the action of the K3 modular group on the period domain. In particular Verbitsky asserts that a totally irrational K3 period has dense orbit in the period domain. 
Our deformation of the Fermat quartic constructed in Section~\ref{sec:fermat} provides Oka marked K3 surfaces with such  totally irrational periods.

\section{Symplectic sprays}\label{sec:symplectic-sprays}
Gromov~\cite{Gromov1989} introduced the notion of spray. A \emph{spray} on a complex manifold $X$ is a holomorphic vector bundle $p:E\to X$ and a holomorphic map $s:E\to X$ such that $s(0_x)=x$. Its
vertical differential at $0_x$ is the restriction of $ds_{0_x}$ to the fiber direction $E_x$. A finite set of sprays is dominating if the images of these
vertical differentials span $T_xX$ at every point. A manifold carrying a dominating family is called \emph{subelliptic}. It is known that every subelliptic manifold is Oka (see \cite{ForstnericSub} and \cite[Cor.~5.6.14]{ForstnericBook}).

\begin{lem}\label{lem:hamiltonian}
Let $(X,\sigma)$ be a holomorphic symplectic manifold and let $f:X\to B$ be
a proper holomorphic map onto a smooth curve. Then there exists a holomorphic
spray $s_f:f^*T^*B\to X$, defined on the whole line bundle, such that
$s_f(x,0)=x$ and, for every $x\in X$, $\lambda\in T^*_{f(x)}B$, and $v\in T_xX$,
\[
\sigma_x\left(
\left.\frac{d}{dt}\right\vert_{t=0}s_f(x,t\lambda),v
\right)
=\lambda(df_x(v)).
\]
\end{lem}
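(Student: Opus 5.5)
The plan is to build the spray from Hamiltonian flows. Locally on $B$ I choose a holomorphic coordinate $z$ and pull back the function $f^*z$. Its Hamiltonian vector field $H_z$ on $X$ is defined by $\sigma(H_z,v)=d(f^*z)(v)=dz(df(v))$. For a covector $\lambda=c\,dz|_{f(x)}$, the candidate is $s_f(x,\lambda)=\Phi^{cH_z}_1(x)$, the time-one flow of $cH_z$. Since the flow of $cH_z$ at time one equals the flow of $H_z$ at time $c$, the $t$-derivative of $s_f(x,t\lambda)$ at $t=0$ is $cH_z(x)$. Then $\sigma(cH_z,v)=c\,dz(df v)=\lambda(df_x v)$, which is exactly the required identity. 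I also have $s_f(x,0)=x$.

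The steps, in order, are the following.

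First, I check that the construction is intrinsic, i.e. independent of the coordinate. The vector field $H_z$ is tangent to the fibres of $f$, because $d(f^*z)(H_z)=\sigma(H_z,H_z)=0$. So it preserves each fibre, and any function of $f$ is constant along its flow. If $w$ is another coordinate, then $dw=g(z)\,dz$ with $g$ holomorphic, so $H_w=(g\circ f)H_z$. The factor $g\circ f$ is constant along each flow line. Hence the flow of $H_w$ for time $c'$ equals the flow of $H_z$ for time $c'g(f(x))$. With $\lambda=c'\,dw=c'g\,dz$, both recipes therefore give the same point. The local definitions glue to a well-defined map on the total space of $f^*T^*B$.

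Second, I establish completeness and holomorphy, using properness. Because $H_z$ is tangent to the compact fibre $f^{-1}(f(x))$, its flow through $x$ stays in that compact fibre and exists for all complex time. Along the fibre it is the flow of a holomorphic vector field on a compact complex manifold. To get the flow defined and holomorphic for all complex time, jointly in $(x,c)$, I argue as follows. Over a small disc $U\subset B$ where $z$ is a coordinate, $f^{-1}(U)$ is a neighbourhood of the compact fibres. The flow is defined for small complex time on $f^{-1}(K)$ for compact $K\subset U$, uniformly. Since the field is tangent to fibres, the semigroup property extends the flow to all $c\in\bC$ by iteration. Holomorphic dependence on initial data and time then gives a holomorphic map on $f^{-1}(U)\times\bC$. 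In the trivialisation this is exactly the spray over $f^{-1}(U)$.

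The main obstacle is this completeness-for-complex-time argument. One must make sure the iteration is legitimate, which holds because flow lines never leave $f^{-1}(K)$, a compact set. One must also make sure the complex flow is well defined, i.e. path-independent in the time plane. That follows from holomorphy together with the simple connectivity of $\bC$, so the local flows glue to a global map. Once this is settled, the derivative identity is the one-line computation above.
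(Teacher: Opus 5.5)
Your proposal is correct and follows essentially the same route as the paper: the Hamiltonian field of $f^*z$, tangency to fibres from $\sigma(V,V)=0$, coordinate independence via $V_w=(c\circ f)V_z$ with $c\circ f$ constant along trajectories, and a complete complex-time flow because the fibres are compact. Your completeness argument is more detailed than the paper's one-line appeal to the continuation theorem. One small correction: fibres may be singular, so the justification should rest on the compact sets $f^{-1}(K)$, as you also do, rather than on ``a holomorphic vector field on a compact complex manifold.''
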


\begin{proof}
Let $z$ be a local coordinate on an open set $U\subset B$. Since $\sigma$
is nondegenerate, there is a unique holomorphic vector field $V_z$ on
$f^{-1}(U)$ characterized by
$\sigma_x(V_z(x),v)=dz_{f(x)}(df_x(v))$ for every $x\in f^{-1}(U)$ and
$v\in T_xX$. Remark that
$dz_{f(x)}(df_x(V_z(x)))=0$. Hence $df_x(V_z(x))=0$, i.e.\  $V_z$ is tangent to the fibers of $f$. Note that each fiber of $f$ is compact. By the continuation theorem for holomorphic ordinary differential equations, the local flow of $V_z$ extends for every complex time. We denote this holomorphic flow by $\Phi_z^t$.  For $x\in f^{-1}(U)$ and $a\in\bC$, define a spray locally on $f^*T^*B$:
\begin{equation}\label{eq:definition-spray}s_f(x,a\,dz_{f(x)}):=\Phi_z^a(x).
\end{equation}
We need to verify that \eqref{eq:definition-spray} is independent of the coordinate, so the spray is globally well defined on the line bundle $f^*T^*B$. Let $w$ be another local coordinate on $B$, so on the overlap $dw=c(z)\,dz$ for some function $c$. By definition we have $V_w=(c\circ f)V_z$. Since $f$ is constant along the flow of $V_z$, the function $c\circ f$ is constant on every trajectory. Therefore, if $a\,dz_{f(x)}=b\,dw_{f(x)}$ then $a=b\,c(f(x))$ and
\[
 \Phi_w^b(x)=\Phi_z^{b c(f(x))}(x)=\Phi_z^a(x).
\]
Thus the local definitions \eqref{eq:definition-spray} glue to a holomorphic map
$s_f:f^*T^*B\to X$ on the whole line bundle. Finally,
\[
 \left.\frac{d}{dt}\right|_{t=0}s_f(x,t\lambda)=aV_z(x)
\]
when $\lambda=a\,dz_{f(x)}$, so the definition of $V_z$ implies the equation in the lemma.
\end{proof}

\begin{prop}\label{prop:finite-fibrations}
Let $(X,\sigma)$ be a compact holomorphic symplectic manifold and let $f_j:X\to B_j$, $1\leq j\leq m$, be holomorphic maps onto smooth curves.
If $(f_1,\ldots,f_m):X\to B_1\times\cdots\times B_m$ is an immersion, then $X$ is subelliptic and hence Oka.
\end{prop}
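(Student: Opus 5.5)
The plan is to apply Lemma~\ref{lem:hamiltonian} to each $f_j$ and show that the resulting $m$ sprays $s_{f_j}:f_j^*T^*B_j\to X$ form a dominating family. Since $X$ is compact, each $f_j$ is proper, so the lemma gives sprays defined on the whole line bundles $f_j^*T^*B_j$. Once domination is established, $X$ is subelliptic by definition. It is then Oka by \cite{ForstnericSub} and \cite[Cor.~5.6.14]{ForstnericBook}.

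The key step is the computation of the vertical differentials. Fix $x\in X$. By Lemma~\ref{lem:hamiltonian}, the vertical differential of $s_{f_j}$ at $0_x$ sends $\lambda\in T^*_{f_j(x)}B_j$ to the vector $w_j(\lambda)\in T_xX$ determined by
\[
\sigma_x(w_j(\lambda),v)=\lambda(d(f_j)_x(v)) \quad\text{for all } v\in T_xX .
\]
In other words, $w_j(\lambda)$ is the image of the covector $(d(f_j)_x)^*\lambda\in T^*_xX$ under the isomorphism $T^*_xX\to T_xX$ induced by the nondegenerate form $\sigma_x$.

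Let $W_x\subset T_xX$ be the span of the images of all $m$ vertical differentials. Under the $\sigma_x$-duality, $W_x$ corresponds to $\sum_j \operatorname{im}\,(d(f_j)_x)^*\subset T_x^*X$. That sum is exactly the image of the transpose of $d(f_1,\ldots,f_m)_x:T_xX\to\bigoplus_j T_{f_j(x)}B_j$.

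We now use the immersion hypothesis: this differential is injective, so its transpose is surjective onto $T_x^*X$. Hence $W_x=T_xX$ for every $x\in X$, and the family $\{s_{f_j}\}$ is dominating.

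Equivalently, one can argue by annihilators. If $v$ is $\sigma_x$-orthogonal to $W_x$, then $\lambda(d(f_j)_x v)=0$ for all $j$ and all $\lambda$. This forces $d(f_j)_x v=0$ for all $j$, hence $v=0$ by injectivity. Nondegeneracy of $\sigma_x$ then gives $W_x=T_xX$.

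I do not expect a real obstacle. The only points needing care are already handled by Lemma~\ref{lem:hamiltonian}, namely the global definition of each spray on the full line bundle using properness and completeness of the flows. What remains is the pointwise linear algebra above and the citation of the theorem that subelliptic manifolds are Oka.
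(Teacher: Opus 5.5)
Your proposal is correct and follows the paper's proof essentially verbatim. You apply Lemma~\ref{lem:hamiltonian} to each $f_j$, identify the vertical differentials via $\sigma_x$-duality with the images of $(df_{j,x})^*$, and use that the transpose of the injective map $T_xX\to\bigoplus_j T_{f_j(x)}B_j$ is surjective; your explicit note that compactness of $X$ supplies the properness the lemma requires is a point the paper leaves implicit.
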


\begin{proof}
By Lemma~\ref{lem:hamiltonian}, each $f_j$ defines a holomorphic spray $s_{f_j}:f_j^*T^*B_j\to X$. As the linear map
$T_xX\to\bigoplus_{j=1}^mT_{f_j(x)}B_j$ whose components are the
$df_{j,x}$ is injective, its dual is surjective, so the subspaces $(df_{j,x})^*T^*_{f_j(x)}B_j$  span $T_x^*X$. Since $\sigma_x$ induces an isomorphism between $T_xX$ and $T_x^*X$, the derivatives of the sprays $s_{f_j}$ in the fiber directions span $T_xX$. In other words these sprays form a finite dominating family. Hence $X$ is
subelliptic and therefore Oka by Forstneri{\v c}'s theorem~\cite{ForstnericSub}.
\end{proof}

\begin{cor}\label{cor:222}
Every smooth hypersurface $X\subset(\bP^1)^3$ of multidegree $(2,2,2)$ is
an Oka K3 surface.
\end{cor}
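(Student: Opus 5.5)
We will apply Proposition~\ref{prop:finite-fibrations} with $m=3$, taking $f_j=p_j|_X:X\to\bP^1$ to be the restrictions of the three coordinate projections $p_j:(\bP^1)^3\to\bP^1$. Then $(f_1,f_2,f_3)$ is simply the inclusion $X\hookrightarrow(\bP^1)^3$, which is an immersion (indeed an embedding). Two points remain: $X$ is a K3 surface, so it carries a holomorphic symplectic form $\sigma$; and each $f_j$ is surjective onto the curve $\bP^1$, as the proposition requires.

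For the K3 property we will use adjunction. The canonical bundle of $(\bP^1)^3$ is $\cO(-2,-2,-2)$, so $K_X=(K_{(\bP^1)^3}\otimes\cO(2,2,2))|_X\cong\cO_X$. The divisor $X$ is ample, so the Lefschetz hyperplane theorem gives $\pi_1(X)\cong\pi_1((\bP^1)^3)=0$, hence $\coh^1(X,\cO_X)=0$. Alternatively, one can read this off from the Koszul sequence $0\to\cO(-2,-2,-2)\to\cO\to\cO_X\to0$ together with Künneth: $\coh^1$ and $\coh^2$ of $\cO(-2,-2,-2)$ vanish and $\coh^3$ of it is $\bC$. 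Hence $X$ is a compact simply connected surface with trivial canonical bundle, i.e.\ a K3 surface. A nowhere vanishing section $\sigma$ of $K_X$ is a holomorphic $2$-form that is nondegenerate at every point, since $\dim X=2$; so $(X,\sigma)$ is compact holomorphic symplectic. Connectedness of $X$ also follows from Lefschetz, or from $h^0(\cO_X)=1$.

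For surjectivity, fix $t\in\bP^1$. The fiber $f_j^{-1}(t)$ is cut out in $X$ by the restriction of $X$ to the slice $\{t\}\times(\bP^1)^2$, which is a curve of bidegree $(2,2)$ there. It is nonempty because the intersection number of $\cO(1,0,0)$ with $\cO(2,2,2)$ against a further ample class is positive. Equivalently, $f_j$ is a nonconstant map from a compact connected surface, since $X$ is not a slice, so it is surjective onto $\bP^1$. Proposition~\ref{prop:finite-fibrations} then applies directly, and $X$ is subelliptic and hence Oka.

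There is no real obstacle in this argument. The only thing to watch is that Lemma~\ref{lem:hamiltonian} uses properness of $f_j$, which is automatic here because $X$ is compact; it does not require the fibers to be smooth. Singular fibers of the elliptic fibrations $f_j$ are therefore harmless: the vector field $V_z$ is still globally defined and complete on $f_j^{-1}(U)$, since it is obtained from $\sigma$ and $df_j$ on the smooth manifold $X$.
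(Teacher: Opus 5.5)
Your proposal is correct and follows the paper's proof: you apply Proposition~\ref{prop:finite-fibrations} to the three coordinate projections, whose product is the inclusion $X\hookrightarrow(\bP^1)^3$. The paper simply cites Huybrechts for the K3 property, whereas you verify it directly via adjunction and Lefschetz (or Koszul and K\"unneth), and you also check surjectivity of the $f_j$; both verifications are sound.
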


\begin{proof}
It is a classical fact that such a hypersurface is K3, see \cite[\S~1.4.1]{Huybrechts}. We obtain the conclusion by applying Prop.~\ref{prop:finite-fibrations} to the three projections onto $\bP^1$. 
\end{proof}

\section{The Fermat quartic and Kummer surfaces}\label{sec:fermat}
In this section we will show that many Kummer surfaces are Oka, including some examples with totally irrational period. In the first subsection we study the Fermat quartic and in the second subsection we construct one deformation family of it.

\subsection{A \texorpdfstring{$(2,2,2)$}{(2,2,2)} model}\label{subsec:222model}

Choose a root of unity $\zeta\in\bC$ with $i=\zeta^2$. Consider the Fermat quartic in $\bP^3$ defined in homogeneous coordinates by 
$x_0^4+x_1^4+x_2^4+x_3^4=0$. It is a K3 surface, see \cite[\S~1.1.3]{Huybrechts}. Via the linear change of coordinates
$x_0=\zeta X$, $x_1=Y$, $x_2=\zeta Z$, $x_3=W$, it is isomorphic to 
\[
Q=\{Y^4+W^4=X^4+Z^4\}\subset\bP^3.
\]
It contains the three pairwise disjoint lines
\[
L_1\!=\{X\!=\!Y, Z\!=\!W\},
L_2\!=\{X\!=\!iY, Z\!=\!-W\},
L_3\!=\{X\!=\!-Y, Z\!=\!iW\}.
\]

Let $H$ denote the hyperplane class on $Q$. 
For each $j$, every plane through $L_j$ cuts $Q$ in a divisor of the form $L_j+C$, where $C$ is a plane cubic. As the plane varies, these cubics form a pencil of divisors of class $H-L_j$ where $H$ denotes the hyperplane class. 

\begin{lem}\label{lem:fermat-pencils}
For each $j=1,2,3$, the cubics cut out on $Q$ by the planes through $L_j$ form a base-point-free pencil whose general member is a smooth genus-one curve.
\end{lem}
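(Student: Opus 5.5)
We will use intersection theory on the K3 surface $Q$ together with adjunction, and check base-point-freeness directly. Set $D_j:=H-L_j$. Since $L_j\cong\bP^1$ is a smooth rational curve on a K3 surface, adjunction gives $L_j^2=-2$. Moreover $H^2=4$ and $H\cdot L_j=1$. Therefore
\[
D_j^2=H^2-2H\cdot L_j+L_j^2=4-2-2=0,
\]
and adjunction ($K_Q=0$) gives arithmetic genus $p_a(D_j)=1+D_j^2/2=1$. The planes through $L_j$ form a pencil $\bP^1$, and the residual cubics give a one-dimensional linear system inside $|D_j|$. By Riemann--Roch on a K3, $h^0(D_j)\ge 2$, and this is exactly the pencil, though we only need the pencil itself.

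For base-point-freeness, we first take the base locus of the pencil of planes, which is $L_j$. A base point of the residual pencil must therefore lie on $L_j$. Fix a point $p\in L_j$. Since $Q$ is smooth, the tangent plane $T_pQ$ contains $L_j$, and it is the unique plane through $L_j$ which is tangent to $Q$ at $p$. For any other plane $\Pi\supset L_j$, the section $\Pi\cap Q$ is smooth at $p$. Hence it is locally just $L_j$ there, so the residual cubic $C_\Pi$ does not pass through $p$. Consequently $p$ lies on exactly one member of the pencil, and the pencil has no base points.

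A cleaner alternative is to observe that $D_j\cdot L_j=H\cdot L_j-L_j^2=1+2=3$. The restricted pencil then induces the degree-$3$ map $L_j\to\bP^1$ given by the Gauss-type map $p\mapsto T_pQ$. A base point would force $D_j\cdot L_j$ to be accounted for differently; the pointwise argument above is the one we will write.

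With the base points excluded, the pencil defines a morphism $\phi_j:Q\to\bP^1$ with fibres in $|D_j|$. Since $D_j^2=0$, $\phi_j$ is a fibration. By Bertini/generic smoothness in characteristic $0$, the general fibre is smooth. A smooth member has genus $p_a=1$, so it is a smooth genus-one curve.

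It remains to check that the general fibre is connected, equivalently that the general member is irreducible rather than a union of disjoint curves. We can argue this either through Stein factorization or directly. A smooth disconnected fibre would split $D_j$ as a sum $m F$ of parallel classes with $F^2=0$ and $2p_a(F)-2=0$. On the plane-section side, a plane cubic that is smooth is automatically connected, being a plane curve. Hence the general member is an irreducible smooth plane cubic, i.e.\ of genus one.

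The main obstacle is making the base-point argument rigorous. This is the claim that the intersection multiplicity of $\Pi\cap Q$ along $L_j$ is exactly one for all but one plane through each point. It reduces to the smoothness of $Q$ along $L_j$, which holds because the Fermat quartic is smooth. If that argument proves awkward, we will verify it by explicit coordinates for $L_1=\{X=Y,Z=W\}$, writing the pencil as $\lambda(X-Y)=\mu(Z-W)$ and factoring. The cases $L_2$ and $L_3$ then follow by the symmetries $Y\mapsto iY$ and $W\mapsto -W$ (and similar) that preserve $Q$.
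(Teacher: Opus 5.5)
Your proof is correct and takes essentially the same route as the paper: base-point-freeness comes from the smoothness of $Q$ along $L_j$, and the general member is then smooth by Bertini and has genus one as a smooth plane cubic. Your tangent-plane argument ($p\in C_\Pi$ forces $\Pi\cap Q$ to be singular at $p$, hence $\Pi=T_pQ$) is the geometric form of the paper's computation that a base point would give $A(p)=B(p)=0$ and hence $d(l_0A+l_1B)_p=0$; the intersection-theoretic preliminaries ($D_j^2=0$, $p_a(D_j)=1$) are correct but not needed, and the vague ``cleaner alternative'' paragraph can be dropped.
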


\begin{proof}
Fix $j$ and let $l_0,l_1$ be two independent linear equations defining $L_j$. Since $L_j\subset Q$, the defining quartic equation of $Q$ can be written as $l_0A+l_1B=0$ where $A$ and $B$ are some cubic forms.

For $[a:b]\in\bP^1$, let $P_{[a:b]}$ be the plane
$a l_0+b l_1=0$. Its intersection with $Q$ is the union of $L_j$ and the residual cubic
\[
C_{[a:b]}=\{a l_0+b l_1=0,\ bA-aB=0\}.
\]

It is clear that the pencil has no base point outside $L_j$. If $p\in L_j$ were a base
point, then $p\in C_{[a:b]}$ for every $[a:b]\in\bP^1$, so
$bA(p)-aB(p)=0$ for every $[a:b]$. Consequently $A(p)=B(p)=0$. Since
$l_0(p)=l_1(p)=0$, we would then have
\[
d(l_0A+l_1B)_p=A(p)\,{dl_0}_p+B(p)\,{dl_1}_p=0,
\]
contrary to the smoothness of $Q$.

Thus the pencil is base-point-free. By Bertini's theorem, its general member is smooth. Each member is a plane cubic, so every smooth member has genus one.
\end{proof}

For $j=1,2,3$, let $\pi_j:Q\to\bP^1$ be the morphism defined by the
pencil of Lemma~\ref{lem:fermat-pencils}, and let $F_j$ denote its fiber divisor class. Since a plane through $L_j$ cuts $Q$ in $L_j$ and a fiber of $\pi_j$, we
have the equality $F_j=H-L_j$ between divisor classes on $Q$. As the intersection numbers satisfy $H^2=4$, $HL_j=1$, and as the lines $L_j$ are pairwise disjoint, we obtain $F_jF_k=2$ for $j\neq k$.

\begin{prop}\label{prop:fermat-model}
The morphism $\Phi:Q\to(\bP^1)^3$ induced by the three pencils constructed above is an isomorphism onto a smooth hypersurface of multidegree $(2,2,2)$.
\end{prop}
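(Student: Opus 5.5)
The plan is to compute $\Phi^*\cO(a,b,c)=aF_1+bF_2+cF_3$ and compare the image with the hypersurfaces of multidegree $(2,2,2)$ using intersection numbers. We also need injectivity of $\Phi$ and injectivity of its differential. Since $Q$ is K3 and each $F_j$ is the class of an elliptic fibration, we have $F_j^2=0$, while $F_jF_k=2$ for $j\neq k$ by the computation above.

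\textbf{Step 1: $\Phi$ is finite and birational onto its image.} Let $D=F_1+F_2+F_3=\Phi^*\cO(1,1,1)$. Then $D^2=2\cdot(2+2+2)=12$. Moreover
\[
D=3H-(L_1+L_2+L_3).
\]
I would check that $D$ is ample, or at least that no curve $C$ satisfies $F_jC=0$ for all $j$. If $F_jC=0$ for every $j$, then $C$ is contained in a fiber of each $\pi_j$. But $F_1+F_2=2H-L_1-L_2$, and a curve contracted by $\pi_1$ and $\pi_2$ would have $C\cdot(2H)=C\cdot(L_1+L_2)$; pushing this through all three fibrations gives $3HC=C(L_1+L_2+L_3)$. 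This needs a small argument with the components of the reducible fibers. A cleaner route is to note $F_1+F_2+F_3+(L_1+L_2+L_3)=3H$. So the image $Y=\Phi(Q)$ is a surface, and
\[
\deg(\Phi)\cdot\deg Y=D^2=12.
\]
For a surface $Y\subset(\bP^1)^3$ of class $(a,b,c)$, one has $\cO(1,1,1)^2\cdot Y=2(a+b+c)$. Restricting to the fibers of each $\pi_j$: the map $F_k$ restricted to a fiber of $\pi_j$ has degree $F_jF_k=2$. This shows the class of $Y$ times $\deg\Phi$ equals $(2,2,2)$, because the multidegree entry $a$ is computed by intersecting with the curve class $\cO(0,1,0)\cdot\cO(0,0,1)$, which pulls back to $F_2F_3=2$.

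\textbf{Step 2: $\deg\Phi=1$.} If $\deg\Phi=2$, then $Y$ would have multidegree $(1,1,1)$, hence be rational, with $Q\to Y$ a double cover. I would rule this out by showing that the fibers of $\pi_1$ map isomorphically: the map $\pi_2|_{C}$ on a smooth genus-one fiber $C$ of $\pi_1$ has degree $2$, and so does $\pi_3|_C$. The map $(\pi_2,\pi_3):C\to\bP^1\times\bP^1$ is then birational onto a $(2,2)$ curve unless $\pi_2|_C$ and $\pi_3|_C$ induce the same involution. That happens exactly when $(F_2-F_3)|_C=0$ in $\mathrm{Pic}(C)$, i.e.\ when $L_2|_C\sim L_3|_C$. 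Here $L_2\cap C$ and $L_3\cap C$ are single points (since $L_kF_1=L_k(H-L_1)=1$), and distinct points on a genus-one curve are not linearly equivalent. The points differ for general $C$ because $L_2\cap L_3=\emptyset$. So $\deg\Phi=1$, and $Y$ has multidegree $(2,2,2)$.

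\textbf{Step 3: $\Phi$ is an isomorphism and $Y$ is smooth.} Since $Y$ has multidegree $(2,2,2)$, adjunction gives $\omega_Y\cong\cO_Y$ and $p_a(Y)=1$, so $Y$ is Gorenstein with trivial dualizing sheaf. The map $\Phi:Q\to Y$ is birational with $K_Q=0$. Therefore $Y$ is normal with at worst rational double points, and $\Phi$ is the contraction of exactly the $(-2)$-curves $C$ with $DC=0$, since $\Phi^*\cO(1,1,1)=D$. Thus the whole statement reduces to showing that $D$ is ample, i.e.\ that $F_1C+F_2C+F_3C>0$ for every irreducible curve $C$. This is the main obstacle. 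A curve with $DC=0$ is contained in fibers of all three fibrations. I would try to exclude it using the explicit geometry. Such a $C$ lies in a plane section through $L_1$, in one through $L_2$, and in one through $L_3$, hence in the intersection of three distinct planes (distinct because the lines are disjoint and skew). Two planes meet in a line, and $C\ne L_j$ since $F_jL_j=(H-L_j)L_j=1+2=3>0$. So $C$ would be a line $\ell\subset Q$ contained in three planes, one through each $L_j$. Hence $\ell$ meets all of $L_1,L_2,L_3$ while having $(H-L_j)\ell=0$, i.e.\ $L_j\ell=1$ for each $j$. I would finish by checking directly on the Fermat surface that no line of $Q$ meets all three $L_j$. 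The $48$ lines of the Fermat quartic are explicit, and the transversals to three skew lines form the ruling of a quadric, so this is a finite check: intersect the quadric through $L_1,L_2,L_3$ with $Q$. If no line of $Q$ meets all three, then $D$ is ample, $\Phi$ is finite and birational onto a normal surface, hence an isomorphism, and $Y$ is smooth.
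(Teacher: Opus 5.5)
Your skeleton is the paper's. Finiteness of $\Phi$ is reduced, exactly as there, to a line of $Q$ lying in a plane through each $L_j$ and hence meeting $L_1,L_2,L_3$, followed by an explicit check. The multidegree is read off from $F_jF_k=2$ by the projection formula.

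The explicit check you propose does go through. After the coordinate change, the $48$ lines are $\{X=\alpha Y,\,Z=\beta W\}$ and $\{X=\gamma W,\,Z=\delta Y\}$ with $\alpha^4=\beta^4=\gamma^4=\delta^4=1$, and $\{Y=\alpha W,\,X=\beta Z\}$ with $\alpha^4=\beta^4=-1$.
A line of the first family meets $L_j$ iff its $\alpha$ or its $\beta$ agrees with that of $L_j$. This cannot hold for all three $j$, because $L_1,L_2,L_3$ have distinct $\alpha$'s and distinct $\beta$'s.
A line of the second family meets $L_1,L_2,L_3$ iff $\gamma\delta=1,-i,-i$ respectively. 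One of the third family meets them iff $\alpha/\beta=1,i,-i$ respectively. In both cases the conditions are incompatible.

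Where you genuinely differ is birationality. The paper exhibits the point $[1:0:0:1]$, where the three fiber planes meet only in that point, and spreads this over a Zariski-open set. You instead restrict to a smooth fiber $C$ of $\pi_1$. There $(\pi_2,\pi_3)|_C$ fails to be birational onto its image only if $F_2|_C\cong F_3|_C$, i.e.\ $\cO_C(p_3-p_2)\cong\cO_C$ with $p_j=L_j\cap C$. This is impossible on a genus-one curve since $p_2\neq p_3$. Your argument is correct and coordinate-free. It uses only $L_jF_k=1$ for $j\neq k$ and the disjointness of the lines, so it applies to any smooth quartic with three pairwise disjoint lines, whereas the paper's argument needs an explicit point.

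The one step you assert rather than prove is ``therefore $Y$ is normal with at worst rational double points''. It is true, but it is the real content of the isomorphism step, and it needs the conductor argument that the paper uses. Do the line check first:
\begin{itemize}
\item The check shows $\Phi$ contracts no curve, so $\Phi\colon Q\to Y$ is finite and birational onto the Gorenstein surface $Y$, with $\omega_Y\cong\cO_Y$.
\item Then $\Phi_*\omega_Q\cong\mathcal{H}om_{\cO_Y}(\Phi_*\cO_Q,\omega_Y)\cong\mathfrak c\otimes\omega_Y$, where $\mathfrak c$ is the conductor. Hence the conductor ideal on $Q$ is isomorphic to $\omega_Q\otimes\Phi^*\omega_Y^{-1}\cong\cO_Q$.
\item So this ideal equals $\cO_Q(-E)$ with $E$ effective and linearly equivalent to $0$. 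Compactness gives $E=0$, so $\Phi_*\cO_Q=\cO_Y$ and $\Phi$ is an isomorphism.
\end{itemize}
In this order, the detour through rational double points, $(-2)$-curves and Nakai--Moishezon is unnecessary.

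A minor slip in Step 1: that $\Phi(Q)$ is a surface follows from $D^2=12>0$, not from the identity $D+L_1+L_2+L_3=3H$.
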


\begin{proof}
We first prove that $\Phi$ is a finite morphism. If an irreducible curve $C\subset Q$ were contracted by all three $\pi_j$, then for each $j$ it would lie in a plane through $L_j$. As the first two such planes inteserct in one line, the curve $C$ would itself be a line on $Q$. Remark that by construction this line would meet each $L_j$ because it shares a plane with each of them. A direct computation with the explicit equations of the three lines $L_j$ shows that no such line would exist. 

We next prove that $\Phi$ is birational onto its image, i.e.\ has degree one. We prove this by showing that a general value has only one preimage. At $p=[1:0:0:1]\in Q$, the three planes corresponding to the fibers of $\pi_1,\pi_2,\pi_3$ through $p$ have equations
\[
X-Y+Z-W=0,\quad
X-iY-Z-W=0,\quad
X+Y-iZ-W=0.
\]
The determinant of their coefficients in $X,Y,Z$ is $2$, so these three planes intersect in the single point $p$. The same is true for triples of
fiber planes parametrized by a nonempty Zariski-open subset of the image of $\Phi$. In other words a general fiber of $\Phi$ consists of one point.

Denote $Y=\Phi(Q)$. Since $\Phi$ is a finite morphism, $Y$ is an integral surface in $(\bP^1)^3$. As $(\bP^1)^3$ is smooth, the Weil divisor $Y$ is Cartier (see \cite[Prop.~2.6.11]{Hartshorne}). Let $H_j$ denote the class on $(\bP^1)^3$ of the pullback line bundle of $\cO_{\bP^1}(1)$ from the $j$-th factor of $(\bP^1)^3$. Then $[Y]=aH_1+bH_2+cH_3$ where $(a,b,c)$ is the multidegree of $Y$. Since $\Phi^*H_j=F_j$ and
$\Phi_*[Q]=[Y]$, we obtain the intersection numbers  
$a=F_2F_3$, $b=F_1F_3$, and $c=F_1F_2$ by the projection formula (see \cite[Prop.~2.3(c)]{FultonBook}). Hence $a=b=c=2$.

As $Y$ is a Cartier divisor in $(\bP^1)^3$, it is Cohen–-Macaulay, i.e.\ locally complete intersection (see \cite[Th.~II.8.21, Prop.~II.8.23]{Hartshorne}). By the adjunction formula (see \cite[Th.~7.11]{Hartshorne}), the dualizing sheaf of $Y$ is trivial. Since the dualizing sheaf of the smooth surface $Q$ is simply its canonical bundle, it is also trivial. Hence the conductor of the finite morphism $\Phi:Q\to Y$ is trivial by \cite[Prop.~2.9]{Piene} (see also \cite[Ex.~III.7.2]{Hartshorne}). This means that $\Phi$ is an isomorphism.
\end{proof}

\begin{cor}\label{cor:fermat-kummer}
The Fermat quartic is an Oka Kummer surface.
\end{cor}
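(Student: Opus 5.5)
The corollary has two parts: the Fermat quartic is Oka, and it is Kummer.

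\emph{Oka.} The linear change of coordinates in \S\ref{subsec:222model} identifies the Fermat quartic with $Q$. By Proposition~\ref{prop:fermat-model}, $Q$ is isomorphic to a smooth hypersurface of multidegree $(2,2,2)$ in $(\bP^1)^3$. Corollary~\ref{cor:222} then shows that $Q$ is Oka. Alternatively, Proposition~\ref{prop:finite-fibrations} applies directly to $\pi_1,\pi_2,\pi_3$, since $\Phi$ is an isomorphism onto its image and hence an immersion. The Oka property is invariant under biholomorphism, so the Fermat quartic is Oka.

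\emph{Kummer.} This part is classical, and I would cite it rather than reprove it. The Fermat quartic is isomorphic to $\Kum(E_i\times E_i)$, where $E_i=\bC/(\bZ+i\bZ)$. This goes back to Mizukami, and is recorded in Shioda--Inose and Huybrechts' book. If a lattice-theoretic argument is preferred, I would proceed as follows.
\begin{enumerate}
\item The Fermat quartic is a singular K3 surface, with Picard number $20$. Its transcendental lattice has Gram matrix $\left(\begin{smallmatrix}8&0\\0&8\end{smallmatrix}\right)$, by Mizukami's computation or via Shioda--Inose.
\item By Shioda--Inose, a singular K3 surface is Kummer exactly when its transcendental lattice is $2T'$ for an even positive definite binary lattice $T'$. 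Here $T=2\left(\begin{smallmatrix}4&0\\0&4\end{smallmatrix}\right)$, and $\left(\begin{smallmatrix}4&0\\0&4\end{smallmatrix}\right)$ is even.
\item Step 2 gives $X\cong\Kum(A)$, where $A$ is the abelian surface with $T(A)=\left(\begin{smallmatrix}4&0\\0&4\end{smallmatrix}\right)$. This $A$ is $E_i\times E_i$ up to the standard identification.
\end{enumerate}
Equivalently, one can check Nikulin's criterion, quoted in the introduction, by exhibiting $16$ disjoint smooth rational curves whose sum is divisible by $2$ in $\NS$.

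The main obstacle is this Kummer identification. The approach above rests on cited deep facts: the computation of $T(\text{Fermat})$ and the Shioda--Inose classification. A self-contained route would be to find sixteen disjoint lines among the $48$ lines on $Q$ and verify that their sum is $2$-divisible. That requires explicit bookkeeping with the lines. I would first try to cite the classical result directly, and fall back on the explicit sixteen lines if a more elementary argument is wanted.
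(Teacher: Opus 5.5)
Your argument is the paper's. The Oka property follows from Proposition~\ref{prop:fermat-model} together with Corollary~\ref{cor:222}, and the Kummer property is cited from Mizukami's theorem.

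One side claim needs correcting, although the corollary is unaffected because it only needs the surface to be Kummer. The Fermat quartic is not $\Kum(E_i\times E_i)$. Indeed $\NS(E_i\times E_i)\cong U\oplus\langle -2\rangle^{\oplus 2}$, so $T(E_i\times E_i)=\operatorname{diag}(2,2)$. Hence $\Kum(E_i\times E_i)$ has transcendental lattice $\operatorname{diag}(4,4)$, not $\operatorname{diag}(8,8)$. Your steps 1--2 produce an abelian surface with $T_A=\operatorname{diag}(4,4)$, and by Shioda--Mitani that surface is $E_i\times E_{2i}$. So step~3 and your opening sentence should read $\Kum(E_i\times E_{2i})$.
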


\begin{proof}
By Proposition~\ref{prop:fermat-model} and Corollary~\ref{cor:222}, the Fermat quartic is Oka. It is a
Kummer surface by Mizukami's theorem, see \cite[App.~Th.~A.1]{ISZ}.
\end{proof}

\subsection{Deformations preserving the three pencils}\label{subsec:deformation}
Recall that $\Lambda$ is the K3 lattice from the introduction. Again we refer to \cite{Huybrechts} for background on lattices and cohomological aspects of K3 surfaces. 
In this subsection we will fix a marking $\coh^2(Q,\bZ)\to \Lambda$ which sends its Kummer sublattice to the fixed primitive sublattice $\Kum\subset\Lambda$. Via this marking, we also regard the fiber classes $F_1,F_2,F_3$ as elements of $\Lambda$. Let $\Lambda_Q\subset\Lambda$ be the smallest primitive sublattice
containing $\Kum,F_1,F_2,F_3$.

By Prop.~\ref{prop:fermat-model}, the class $F_1+F_2+F_3$ is the pullback of
$\cO_{(\bP^1)^3}(1,1,1)$, thus ample. The Hodge index theorem then implies that $\Lambda_Q$ is nondegenerate of signature
$(1,\operatorname{rk}\Lambda_Q-1)$. Since $\operatorname{rk}\Kum=16$, we have $\operatorname{rk}\Lambda_Q\leq 19$. Hence the period
domain 
\begin{equation}\label{eq:definition-period-Q}
\mathcal D_Q=\{P\in\mathcal D\mid P\perp \Lambda_Q\}
\end{equation}
is contained in the Kummer period domain 
$\mathcal D_K$ and has complex dimension
\begin{equation}\label{eq:period-Q-dimension}
20-\operatorname{rk}\Lambda_Q\geq 1.
\end{equation}

The following proposition performs a deformation of Prop.~\ref{prop:fermat-model}. It is the key geometric input for the dynamical machinery that we investigate later in Section~\ref{sec:periods}. 
\begin{prop}\label{prop:deformation-open}
A nonempty open subset of $\mathcal D_Q$ parametrizes Kummer surfaces which are smooth hypersurfaces of multidegree $(2,2,2)$ in $(\bP^1)^3$. All these surfaces are Oka.
\end{prop}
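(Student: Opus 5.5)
The plan is to deform the marked surface $Q$ inside the locus where the classes of $\Lambda_Q$ stay algebraic, and to check that the three elliptic pencils survive. Once they do, the argument of Prop.~\ref{prop:fermat-model} applies to the nearby fibers.

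First I set up the deformation. Let $\pi:\mathcal X\to B$ be a local Kuranishi family of $Q$, marked compatibly with the fixed marking of $Q$. By the local Torelli theorem the period map $B\to\mathcal D$ is a local isomorphism onto a neighbourhood of $P_Q$. Let $B_Q\subset B$ be the preimage of $\mathcal D_Q$. This is a smooth submanifold of dimension $20-\operatorname{rk}\Lambda_Q\ge 1$, by \eqref{eq:period-Q-dimension}. For $b\in B_Q$ every class of $\Lambda_Q$ is of type $(1,1)$ on $X_b$, so $\Lambda_Q\subset\NS(X_b)$. In particular $\Kum\subset\NS(X_b)$ primitively, and $X_b$ is Kummer by Nikulin's criterion.

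Next I show that each $F_j$ remains the class of an elliptic pencil on $X_b$ for $b\in B_Q$ near $0$. The classes $F_j$ are nef with $F_j^2=0$, and by Riemann--Roch we have $h^0(X_b,F_j)\ge 2$ once $F_j$ is effective on $X_b$. Effectivity comes from Riemann--Roch together with $(F_1+F_2+F_3)\cdot F_j>0$, using that $F_1+F_2+F_3$ remains in the positive cone. To get base-point-freeness and the fibration structure I use upper semicontinuity. The line bundles $\mathcal L_j$ with class $F_j$ extend over $\mathcal X|_{B_Q}$, because the class stays of type $(1,1)$ and $\coh^1(\cO)=0$. We have $h^1(Q,F_j)=0$, since $F_j$ is a fiber class of an elliptic fibration and $h^1=h^0-2=0$ with $h^0=2$. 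Hence $h^0$ is constant, equal to $2$, and the sections deform. Base-point-freeness is an open condition in families, since the base locus is closed and proper over $B_Q$ and empty over $0$. Shrinking $B_Q$, we get morphisms $\pi_{j,b}:X_b\to\bP^1$ deforming $\pi_j$.

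Then I need $\Phi_b=(\pi_{1,b},\pi_{2,b},\pi_{3,b}):X_b\to(\bP^1)^3$ to be an isomorphism onto a smooth $(2,2,2)$ hypersurface for $b$ near $0$. The cleanest route is openness: $\Phi_0$ is a closed embedding by Prop.~\ref{prop:fermat-model}, and the maps $\Phi_b$ form a holomorphic family over $B_Q$. Being a closed embedding, that is an injective immersion of a compact manifold, is an open condition in a proper family. So $\Phi_b$ is an embedding for $b$ in a neighbourhood $U$ of $0$. The multidegree is computed exactly as in Prop.~\ref{prop:fermat-model} from $F_jF_k=2$, and the image is smooth because it is isomorphic to $X_b$. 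Corollary~\ref{cor:222} then shows that every $X_b$ with $b\in U$ is Oka. The period map sends $U$ onto an open subset of $\mathcal D_Q$ containing $P_Q$, which gives the statement.

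The main obstacle I expect is the second step: making sure that the line bundles and their sections genuinely extend over all of $B_Q$, not only formally, and that the deformed sections still define morphisms. This rests on the vanishing $h^1(Q,\cO(F_j))=0$ and on Grauert's base change theorem, which I would check first. Alternatively, one can bypass the line bundles and directly deform the hypersurface $\Phi_0(Q)$ among $(2,2,2)$ hypersurfaces. This gives a family whose period map lands in $\mathcal D_Q$, and I would then argue that its image is open in $\mathcal D_Q$. That requires a dimension count showing that the generic $(2,2,2)$ surface deforming $Q$ has Picard lattice exactly $\Lambda_Q$, which seems harder, so I would use it only as a fallback.
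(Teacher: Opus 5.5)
Your proposal is correct and follows the paper's route. You extend the line bundles $\cO_Q(F_j)$ over the preimage of $\mathcal D_Q$, deform the two sections of each pencil using $h^1=h^2=0$, semicontinuity, Riemann--Roch and Grauert's theorem, use openness of the embedding condition for $\Phi_b$, and conclude with Cor.~\ref{cor:222} and Nikulin's criterion. Your separate effectivity/nefness step is unnecessary, since semicontinuity of $h^1,h^2$ together with $\chi=2$ already forces $h^0(X_b,F_j)=2$.
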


\begin{proof}
Let $p:\mathcal X\to T$ be a local Kuranishi family of $Q$ over a simply connected base $T$, with central point $0\in T$. Our fixed marking $\coh^2(Q,\bZ)\to\Lambda$ extends to a trivialization of $R^2p_*\bZ$, i.e.\ it induces markings on nearby fibers. The induced local period map
$\operatorname{per}:T\to\mathcal D$ is a local biholomorphism by
the local Torelli theorem (see \cite[Ch.~6,7]{Huybrechts}).

Let $B:=\operatorname{per}^{-1}(\mathcal D_Q)$. Up to shrinking $T$, we can and will assume that the period map sends $B$ biholomorphically to a neighborhood of the period of
$Q$ in $\mathcal D_Q$. By definition \eqref{eq:definition-period-Q} of $\mathcal D_Q$, every class in $\Lambda_Q$, and in particular each $F_i$, is orthogonal to the period, thus of type $(1,1)$ on every fiber over
$B$. Therefore they are classes of line bundles. The line bundles $\cO_Q(F_i)$ thus extend to line bundles $\mathcal L_i$ on $\mathcal X$ (see \cite[Ch.~6, \S2.4]{Huybrechts} for the arguments in this paragraph).

We have $\cO_Q(F_i)\simeq\pi_i^*\cO_{\bP^1}(1)$ for the fibration $\pi_i:Q\to\bP^1$, so $h^0(Q,\cO_Q(F_i))=2$. Moreover, $h^2(Q,\cO_Q(F_i))=h^0(Q,\cO_Q(-F_i))=0$ by Serre duality (see \cite[\S~I.5]{BHPV}). Since the self-intersection $F_i^2=0$ vanishes because $F_i$ is a fiber class, we get $\chi(Q,\cO_Q(F_i))=2$ by the Riemann--Roch theorem (see \cite[\S~I.5]{BHPV}). Therefore $h^1(Q,\cO_Q(F_i))=0$. 

For $b\in B$, let $X_b=p^{-1}(b)$ and
$\mathcal L_{i,b}=\mathcal L_i|_{X_b}$. Since
$h^1(Q,\cO_Q(F_i))=h^2(Q,\cO_Q(F_i))=0$, by using the semicontinuity theorem (see \cite[\S~7, Satz~3]{Grauert1960}) we can and will assume, up to shrinking $B$, that
$h^1(X_b,\mathcal L_{i,b})=h^2(X_b,\mathcal L_{i,b})=0$ for every
$b\in B$. The class of $\mathcal L_{i,b}$ has self-intersection $0$ by definition, so the Riemann--Roch theorem on the K3 surface $X_b$ implies
$\chi(X_b,\mathcal L_{i,b})=2$. Hence
$h^0(X_b,\mathcal L_{i,b})=2$ for every $b\in B$. 
By Grauert's theorem \cite[\S~7, Satz~5]{Grauert1960}, a basis of $\coh^0(Q,\cO_Q(F_i))$ extends to two sections of $\mathcal L_i$ whose restrictions form a basis of $\coh^0(X_b,\mathcal L_i|_{X_b})$ for every $b\in B$. As these two sections have no common zero on $Q$, up to shrinking $B$ we can and will assume that they have no common zero on any fiber. Therefore, by taking ratios of the two sections we obtain morphisms $\pi_{i,b}:X_b\to\bP^1$ which deform the three initial fibrations $\pi_i$.

The product
$\Phi_b=(\pi_{1,b},\pi_{2,b},\pi_{3,b}):X_b\to(\bP^1)^3$ is an embedding
for $b=0$ by Prop.~\ref{prop:fermat-model}. Since being an embedding is an open property, up to shrinking again $B$ we assert that every $\Phi_b$ is an embedding.
Its image is a smooth hypersurface which has necessarily multidegree $(2,2,2)$ (see the argument in Prop.~\ref{prop:fermat-model}). Thus every $X_b$ is Oka by Cor.~\ref{cor:222}.

Finally, remark that by construction $\Kum$ is primitive in the Néron--Severi group $\operatorname{NS}(X_b)$ of every fiber $X_b$. By Nikulin's criterion \cite[Th.~3]{NikulinKummer} we conclude that every $X_b$ is a Kummer surface.
\end{proof}

\begin{prop}\label{prop:rational-free-period}
There is a marked Oka Kummer surface with period $P_{\mathrm {irr}}\in\mathcal D_K$ such that
$P_{\mathrm {irr}}\cap\Lambda_\bQ=\{0\}$.
\end{prop}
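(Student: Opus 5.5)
We use Proposition~\ref{prop:deformation-open}: it gives a nonempty open subset $U\subset\mathcal D_Q$ all of whose points are periods of marked Oka Kummer surfaces. Since $\mathcal D_Q\subset\mathcal D_K$, it suffices to find $P\in U$ with $P\cap\Lambda_\bQ=\{0\}$. The idea is to show that the periods containing a nonzero rational vector form a countable union of proper analytic subsets of $\mathcal D_Q$. By Baire's theorem, or simply because a countable union of nowhere dense sets cannot cover the open set $U$, some point of $U$ avoids all of them.

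First we show that a nonzero rational vector is only possible in one direction. Suppose $\lambda\in\Lambda$ is nonzero and lies in $P$. Then $\lambda\in P\subset\Lambda_Q^\perp\otimes\bR$, so $\lambda\in \Lambda_Q^\perp=:T_Q$, which is a lattice of signature $(2,20-\operatorname{rk}\Lambda_Q)$. Write $P=\operatorname{span}_\bR\{\operatorname{Re}\omega,\operatorname{Im}\omega\}$ with $\omega\in T_Q\otimes\bC$, $\omega^2=0$ and $\omega\bar\omega>0$. The condition $\lambda\in P$ means that $\lambda$ is a real combination of $\operatorname{Re}\omega$ and $\operatorname{Im}\omega$. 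This forces $\lambda\perp \omega\wedge\ldots$ in a way we can make precise: $\lambda\in P$ is equivalent to $P^\perp\perp\lambda$, that is, $P$ containing $\lambda$.

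For each fixed $\lambda$ the locus $Z_\lambda=\{P\in\mathcal D_Q: \lambda\in P\}$ is a closed real-analytic subset. We estimate its dimension. Choose a positive plane $P$ containing a given positive vector $\lambda$; such $P$ is determined by a second positive direction in $\lambda^\perp$. So $Z_\lambda$ is the set of points of $\mathcal D_Q$ whose plane contains the line $\bR\lambda$. This has real dimension $\dim_\bR(\text{positive lines in }\lambda^\perp\cap T_{Q,\bR})$, which is $r-2$ where $r=\operatorname{rk}T_Q=22-\operatorname{rk}\Lambda_Q\geq 3$. It is empty unless $\lambda^2>0$. On the other hand $\dim_\bR\mathcal D_Q=2(r-2)$. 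Hence $Z_\lambda$ has real codimension $r-2\ge1$ and is closed, so it is nowhere dense, since a closed real-analytic proper subset of a connected real-analytic manifold has empty interior. Alternatively, it is enough to check that $Z_\lambda\neq\mathcal D_Q$: a generic positive plane in $T_{Q,\bR}$ does not contain $\lambda$.

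Since $\Lambda$ is countable, $U\setminus\bigcup_{\lambda\neq0}Z_\lambda$ is nonempty, and a point $P_{\mathrm{irr}}$ in it has the desired property. The surface given by Proposition~\ref{prop:deformation-open} with this period is then the required marked Oka Kummer surface.

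The main obstacle is this dimension count, namely verifying that $Z_\lambda$ is a proper closed subset. We rely on $r\ge3$, which comes from $\operatorname{rk}\Lambda_Q\le19$ in \eqref{eq:period-Q-dimension}. Beyond that it is routine linear algebra on the Grassmannian of oriented positive planes.
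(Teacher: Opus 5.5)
Your proposal is correct and follows essentially the same route as the paper. For each nonzero lattice vector $\lambda$, the locus of planes in $\mathcal D_Q$ containing $\lambda$ is empty unless $\lambda\perp\Lambda_Q$ and $\lambda^2>0$; otherwise it is a closed submanifold of real dimension $20-\operatorname{rk}\Lambda_Q$ inside the $2(20-\operatorname{rk}\Lambda_Q)$-dimensional $\mathcal D_Q$, and Baire applied to the open set from Proposition~\ref{prop:deformation-open} finishes the argument. The sentence about $\omega$ and the equivalence $\lambda\in P \Leftrightarrow P^\perp\perp\lambda$ is tautological filler and can be deleted.
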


\begin{proof}
Let $B\subset\mathcal D_Q$ be the nonempty open subset provided by Prop.~\ref{prop:deformation-open}. For a nonzero vector $v\in\Lambda_\bQ$, consider
the locus of periods $P\in\mathcal D_Q$ containing $v$. This locus is
empty unless $v\in ((\Lambda_Q)_\bR)^\perp$ and $v^2>0$. When it is nonempty, the
positive definite oriented two-planes containing the positive line $\bR v$ are parametrized by positive lines in
$v^\perp\cap ((\Lambda_Q)_\bR)^\perp$. They therefore form a closed real submanifold of dimension $20-\operatorname{rk}\Lambda_Q$, whereas $\mathcal D_Q$ has real
dimension $2(20-\operatorname{rk}\Lambda_Q)$. By \eqref{eq:period-Q-dimension} this locus is nowhere dense.

There are only countably many nonzero vectors in $\Lambda_\bQ$. Since
$B$ is a Baire space, it contains a period $P_{\mathrm {irr}}$ which contains none of
the above loci. In other words we can find a period satisfying $P_{\mathrm {irr}}\cap\Lambda_\bQ=\{0\}$.
\end{proof}

\section{Period domain and ergodic complex structure}\label{sec:periods}

Let $\Ort(\Lambda)$ be the isometry group of the K3 lattice $\Lambda$ and let $\Gamma\subset \Ort(\Lambda)$ be a finite-index subgroup preserving $\mathcal D$. By Verbitsky's theorem~\cite[Th.~2.5]{Verbitsky} which is an application of Ratner's theory~\cite{Ratner}, the lattice $P_{\mathrm {irr}}$ constructed in Prop.~\ref{prop:rational-free-period} has dense orbit in the period domain:
\begin{equation}\label{eq:dense-period-orbit}
\overline{\Gamma\cdot P_{\mathrm {irr}}}=\mathcal D.
\end{equation}
Every point of the orbit $\Gamma \cdot P_{\mathrm {irr}}$ is the period of a marking of the same Oka surface.

We need a similar density statement inside the Kummer period domain. Let $L=\Kum^\perp\subset\Lambda$ and let
$\Gamma_{\Kum}\subset \Ort(\Lambda)$ be the subgroup of $\Ort(L)$ acting trivially on
$\Kum$. We assert that the image of $\Gamma_{\Kum}$ in $\Ort(L)$ has finite index. Indeed, since $\Kum$ and $L$ are primitive orthogonal complements in the
unimodular lattice $\Lambda$, we have $\Kum\oplus L\subset\Lambda\subset\Kum^\vee\oplus L^\vee$ where $\Kum^\vee,L^\vee$ denote the dual lattices. If
$g\in \Ort(L)$ acts trivially on the discriminant group $L^\vee/L$, then $g(y)-y\in L$ for every $y\in L^\vee$, so $\operatorname{id}_{\Kum}\oplus g$ preserves
$\Lambda$. Thus the image of $\Gamma_{\Kum}$ contains
$\ker(\Ort(L)\to \Ort(L^\vee/L))$, which has finite index.

Let $\Gamma_{\Kum}^+$ be the subgroup of $\Gamma_{\Kum}$ preserving $\mathcal D_K$. Its
image has finite index in the subgroup of $\Ort(L)$ preserving
$\mathcal D_K$. Since $L$ has signature $(3,3)$ and
$P_{\mathrm{irr}}\cap L_\bQ=\{0\}$, Verbitsky's theorem \cite[Th.~2.5]{Verbitsky} still can be applied, and we obtain
\begin{equation}\label{eq:dense-kummer-orbit}
\overline{\Gamma_{\Kum}^+\cdot P_{\mathrm {irr}}}=\mathcal D_K.
\end{equation}
Every point of this orbit is again the period of a marking of the same Oka
Kummer surface.

\begin{proof}[Proof of Theorem~\ref{mainthm:period}]
By \eqref{eq:dense-period-orbit}, the set $\mathcal O$ is dense in the period domain $\mathcal D$. We claim that it is also $G_\delta$. By the local Torelli theorem~\cite[Ch.~6,7]{Huybrechts} every point of
$\mathcal D$ has a neighborhood biholomorphic to the base of a marked Kuranishi family. Two marked K3 surfaces with the same
period have biholomorphic underlying surfaces by the global Torelli theorem~\cite[Ch.~7, Th.~5.3]{Huybrechts}. Thus, the intersection of $\mathcal O$ with such a neighborhood coincides with the Oka locus of the corresponding Kuranishi family. This locus is $G_\delta$ by L\'arusson's theorem~\cite[Cor.~8]{Larusson}. Hence $\mathcal O$ is locally
$G_\delta$ in the period domain, and therefore $G_\delta$ by Lemma~\ref{lem:local-gdelta} below.

For every nonzero $h\in\Lambda$, the locus $\{P\in\mathcal D\mid P\perp h\}$ is a closed codimension-one analytic subset. Consequently the locus $\{P\in\mathcal D\mid\Lambda\cap P^\perp=0\}$ is a dense $G_\delta$ subset of $\mathcal{D}$. Its intersection with $\mathcal O$ is therefore again a dense $G_\delta$ subset because $\mathcal D$ is a Baire space. For every period in this intersection the corresponding K3 surface is nonprojective and contains no curve because its N\'eron--Severi group vanishes.

By \eqref{eq:dense-kummer-orbit}, $\mathcal O_K$ is dense in
$\mathcal D_K$, and it is $G_\delta$ because
$\mathcal O_K=\mathcal O\cap\mathcal D_K$. The same argument as above
shows that periods with N\'eron--Severi lattice exactly equal to $\Kum$ form a dense $G_\delta$ in $\mathcal D_K$. Indeed, if
$h\in\Lambda\setminus\Kum$, then its projection to $L_\bQ$ is nonzero because $\Kum$ is primitive. Hence the condition $P\perp h$ defines a closed hypersurface in
$\mathcal D_K$. Thus the Oka periods with N\'eron--Severi lattice exactly equal to $\Kum$ form a dense $G_\delta$ in $\mathcal D_K$. Since $\Kum$ is
negative definite, the corresponding K3 surfaces are nonprojective by the Hodge index theorem.
\end{proof}

\begin{lem}\label{lem:local-gdelta}
Let $M$ be a second-countable metrizable space and let $A\subset M$ be a subset. If $A$ is locally $G_\delta$, then it is $G_\delta$.
\end{lem}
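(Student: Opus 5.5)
We use second countability to reduce an open cover to a countable one, and write $A$ as a countable union of pieces, each an intersection of an open set with a $G_\delta$ set. "Locally $G_\delta$" means: every point $x\in A$ has an open neighbourhood $U_x$ such that $A\cap U_x$ is a $G_\delta$ subset of $U_x$, equivalently of $M$ (open subsets of $M$ are $G_\delta$ in $M$). One can also ask this for all $x\in M$; we handle both readings.

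\emph{Step 1.} The set $W:=\bigcup_{x\in A}U_x$ is open. Since $M$ is second countable, so is $W$, hence $W$ is Lindelöf. Choose countably many $U_n=U_{x_n}$ covering $W$, and write $A\cap U_n=\bigcap_k G_{n,k}$ with each $G_{n,k}$ open in $M$. We may replace $G_{n,k}$ by $G_{n,k}\cap U_n$. Then $A=\bigcup_n (A\cap U_n)$ is a countable union of $G_\delta$ sets $A_n$, each contained in the open set $U_n$ and satisfying $A\cap U_n=A_n$.

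\emph{Step 2.} A countable union of $G_\delta$ sets is not $G_\delta$ in general, so we must use the compatibility $A\cap U_n=A_n$. The trick is to use metrizability: every closed set is $G_\delta$, so open sets are $F_\sigma$. Write each $U_n=\bigcup_j C_{n,j}$ with $C_{n,j}$ closed.

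\emph{Step 3.} Form the complement. We have $M\setminus A=(M\setminus W)\cup\bigcup_n (U_n\setminus A_n)$, and each $U_n\setminus A_n=\bigcup_{j,k} (C_{n,j}\setminus G_{n,k})$ is a countable union of closed sets, hence $F_\sigma$. This equality is where $A\cap U_n=A_n$ is used.

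\emph{Step 4.} Conclude under each reading.

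If the hypothesis is for every $x\in M$ (the version used in the proof of Theorem~\ref{mainthm:period}, where every point of $\mathcal D$ has such a neighbourhood, so $W=M$), then $M\setminus W$ is empty. Hence $M\setminus A$ is $F_\sigma$ and $A$ is $G_\delta$.

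If the hypothesis is only for $x\in A$, then $M\setminus W$ is closed, hence $F_\sigma$ by metrizability. So $M\setminus A$ is again a countable union of closed sets, and $A$ is $G_\delta$.

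The main obstacle is Step 2: a naive countable union of $G_\delta$'s fails to be $G_\delta$. This is overcome by passing to complements and using that open sets in a metrizable space are $F_\sigma$.
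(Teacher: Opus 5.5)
Your proof is correct and follows essentially the same route as the paper. Both arguments take a countable subcover via second countability, write $M\setminus A$ as a countable union of the pieces $U_n\setminus A$, and show each piece is $F_\sigma$ in $M$ because open sets are $F_\sigma$ in a metrizable space. Your extra case, where neighbourhoods are given only at points of $A$, is a harmless strengthening: there the closed set $M\setminus W$ is trivially $F_\sigma$ and needs no metrizability, while the paper takes its cover to be of all of $M$.
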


\begin{proof}
The metrizability ensures that every open subset of $M$ is $F_\sigma$. Consider a countable open cover $(U_j)$ of $M$ such that $U_j\setminus A$ is $F_\sigma$ in $U_j$. Every subset closed in $U_j$ has the form $U_j\cap F$ with $F$ closed in $M$, hence is $F_\sigma$ in $M$ because $U_j$ is. Thus each $U_j\setminus A$ is $F_\sigma$ in $M$, and so is $M\setminus A=\bigcup_j(U_j\setminus A)$.
\end{proof}

\begin{proof}[Proof of Theorem~\ref{mainthm:K3-Kuranishi}]
Choose a marking of the local Kuranishi family and let
$\operatorname{per}:B\to\mathcal D$ be its period map. By the local Torelli theorem, $\operatorname{per}$ is a local biholomorphism.

Let $V\subset B$ be a nonempty open subset. Then
$\operatorname{per}(V)$ is open in $\mathcal D$. By Theorem~\ref{mainthm:period}, it contains a period represented by an Oka K3 surface. If $b\in V$ has this period, then the corresponding K3 fiber $X_b$ is biholomorphic to that surface by the global Torelli theorem. Therefore $V\cap B_{\mathrm{Oka}}\neq\emptyset$. Since $V$ was arbitrary, $B_{\mathrm{Oka}}$ is dense in $B$. It is $G_\delta$ by L\'arusson's theorem~\cite[Cor.~8]{Larusson}.

Similarly, Theorem~\ref{mainthm:period} implies that every
$\operatorname{per}(V)$ contains an Oka period $P$ satisfying
$\Lambda\cap P^\perp=0$. Since
$\varphi_b(\NS(X_b))=\Lambda\cap\operatorname{per}(b)^\perp$, the
corresponding fiber $X_b$ is Oka and satisfies $\NS(X_b)=0$. Thus these parameters are also dense in $B$. Moreover, the locus where $\NS(X_b)=0$ is the inverse image under $\operatorname{per}$ of
$\{P\in\mathcal D\mid\Lambda\cap P^\perp=0\}$, which is $G_\delta$ by the proof of Theorem~\ref{mainthm:period}. Its intersection with $B_{\mathrm{Oka}}$ is therefore $G_\delta$.
\end{proof}

\section*{Acknowledgements}
We are grateful to Yuta Kusakabe for his valuable comments and suggestions. S.-Y. Xie thanks Steven Lu for bringing Verbitsky's work on ergodic complex structures to his attention during his visit to the Institute of Mathematics, Academia Sinica, in June 2026. He also thanks Julie Tzu-Yueh Wang and the Institute for their hospitality. S.-Y. Zhao thanks Serge Cantat for teaching him years ago the theory of K3 surfaces and Luanyin Shen for kind help with Grauert's theorems.

\section*{Funding}

S.-Y. Xie acknowledges partial support from the National Key R\&D Program of China under Grants No. 2023YFA1010500 and No. 2021YFA1003100, and from the National Natural Science Foundation of China under Grants No. 12288201 and No. 12471081, as well as support from the  Xiaomi Young Talents Program.

S.-Y. Zhao acknowledges partial support from the French National Research Agency under the projects GAG (ANR-24-CE40-3526-01) and DynAtrois (ANR-24-CE40-1163).

\section*{AI disclosure}
The authors developed the idea of the construction and wrote the proof. ChatGPT 5.6 and DeepSeek have been used for reference searching, proof reading and English grammer checking. The authors take full responsability of the paper.

\bibliographystyle{amsplain}
\bibliography{biblio}

\end{document}